\documentclass[12pt, a4paper]{article}
\usepackage{custom}
\usepackage{float}


\chead{Local models of isolated singularities for affine special K\"ahler structures}

\title{Local models of isolated singularities for affine special K\"ahler structures in dimension two}
\author{Martin Callies{\  } and Andriy Haydys}
\date{19th June 2018}

\begin{document}
\maketitle

 \begin{abstract}
  We construct local models of isolated  singularities for special K\"ahler structures in real dimension two assuming that the associated holomorphic cubic form does not have essential singularities. 
  As an application we compute the holonomy of the flat symplectic connection, which is a part of the special K\"ahler structure. 
 \end{abstract}


\section{Introduction}

The notion of a special K\"ahler structure appeared for the first time in physics~\cite{Gates84_SuperspaceFormulation_SpKaehler, deWitVanProyen84PotentialsSymmetries} and was formalized by Freed~\cite{Freed99_SpecialKaehler}.
For reader's convenience, let us recall the definition of the affine special K\"ahler structure. 

\begin{definition}
	An \emph{(affine) special K\"ahler structure}\index{metric!special K\"ahler} on a manifold $M$ is a quadruple $(g,I,\omega, \nabla)$, where $(M,g,I,\omega)$ is a K\"ahler manifold with Riemannian metric $g$, complex structure $I$, and symplectic form $\omega(\cdot,\cdot)=g(I\cdot,\cdot)$, and $\nabla$ is a flat symplectic torsion-free connection on the tangent bundle $TM$ such that 
	\begin{equation}
	\label{Eq_SpKaehlerCondition}
	(\nabla^{}_XI)Y=\, (\nabla^{}_Y I)X
	\end{equation}
	holds for all vector fields $X$ and $Y$.
\end{definition}

If $I$ is fixed, which is always assumed to be the case below, we say for simplicity that $(g,\nabla)$ is a special K\"ahler structure.

The importance of special K\"ahler structures stems from the so called c-map construction, which associates to each special K\"ahler manifold a \hK one, which is equipped with the structure of a holomorphic Lagrangian fibration~\cites{CecottiEtAl89_GeomOfTypeIISuperstrings, Freed99_SpecialKaehler, MaciaSwann15_TwistGeomCmap}.
Moreover, under suitable hypotheses the converse construction also exists~\cite{Freed99_SpecialKaehler}*{Sect.\,3}.
Typically, a holomorphic Lagrangian fibration contains singular fibers and in this case the corresponding special K\"ahler structure has  singularities.   
The importance of \emph{singular} special K\"ahler structures can be also seen from the following fact~\cite{Lu99_NoteOnSpKaehler, BauesCortes01_SpKParabolicSpheres}: A complete special K\"ahler metric is necessarily flat.  

Isolated singularities of affine special K\"ahler structures in the simplest case of two real dimensions are in the focus of this article, which is closely related to~\cite{Haydys15_IsolSing_CMP}.
However, here we focus on the properties of  $\nabla$ rather than $g$ near an isolated singularity. 
Properties of $\nabla$, in particular its holonomy, contain important information about behavior of the corresponding holomorphic Lagrangian fibration near a singular fiber.

A number of special K\"ahler structures with isolated singularities can be found~\cite{Haydys15_IsolSing_CMP, CalliesHaydys17_AffSpK2D_Arx}.  
We now describe a family of examples, whose significance will be clear below. 
Thus, let $B_1^*:=\{   0< |z|<1 \}$ be the punctured disc; 
Denote by $(r, \theta)$ the polar coordinates on $\C^*$, where $\theta\in (0,2\pi)$, and  put $\rho:=\log r$,
\[
\mathbbm 1_2:= 
\begin{pmatrix}
1 & 0\\
0 & 1
\end{pmatrix},\qquad\text{and}\qquad
\rI_2:=
\begin{pmatrix}
0 & -1\\
1 & \phantom{-}0
\end{pmatrix}.
\]
We show that for any $k\in\Z$, $C\in \R_{>0}$,  and $b\in\C, \ |b|=1,$ the following
\begin{equation}
 \label{Eq_ModelLogSing}
	 \begin{aligned}
	 g_{k } &= -C\, r^{k}\log r\, |dz|^2,\\
	 \omega_{k,\nabla} &= 
	 \frac 12
	 \left(k\rI_2 + 
	 \begin{pmatrix} 
	 \Im\!\left(b e^{ik\theta}\right) & -1+\Re\!\left(b e^{ik\theta}\right)\\
	 1 + \Re\!\left(b e^{ik\theta}\right) &  -\Im\!\left(b e^{ik\theta}\right)
	 \end{pmatrix} \rho^{-1} \right) d\theta  \\
	 &+  
	 \frac 12\left(k\mathbbm 1_2	+
	 \begin{pmatrix}
	 1 - \Re\!\left(b e^{ik\theta}\right) & \Im\!\left(b e^{ik\theta}\right)\\
	 \Im\!\left(b e^{ik\theta}\right) & 1+\Re\!\left(b e^{ik\theta}\right)
	 \end{pmatrix}\rho^{-1}  
	 \right) d\rho
	 \end{aligned}
\end{equation}
is a special K\"ahler structure on $B_1^*$, where the dependence on $C$ and $b$ is suppressed in the notations.  
Here $\om_{k,\nabla}$ is the connection one-form of $\nabla$ \wrt the trivialization $(\partial_x, \partial_y)$, where $z=x+ yi$; 
See Section~\ref{Sect_Models} for further details.
Even though~\eqref{Eq_ModelLogSing} may look incomprehensible at first glance, we show that many objects of interest, such as special holomorphic coordinates, the associated holomorphic cubic form, and the holonomy around the origin for these special K\"ahler structures can be computed explicitly. 

While for some values of $k$ the metric $g_k$ was previously known to be special K\"ahler, see for instance~\cite{CalliesHaydys17_AffSpK2D_Arx}*{Ex.\,23,\,24}, we believe for most values of $k$ these examples are new. 

Furthermore, we show that~\eqref{Eq_ModelLogSing} together with flat cones
\begin{equation}
  \label{Eq_FlatConicalStr}
g^c_\b = r^{\b} |dz|^2,\qquad \omega^c_{\b,\nabla}=\omega_{LC} = \frac \b 2
\bigl ( \mathbbm 1_2\, d\rho + \rI_2\, d\theta \bigr),
\end{equation}
 where $\b\in\R$, are local models of isolated singularities of affine special K\"ahler structures in real dimension two. 
 To explain, let $(M, I)$ be a smooth Riemann surface and let $(g,\nabla)$ be  a special K\"ahler structure on $M$ possibly singular at some $m_0\in M$.
 Assume  that the  associated holomorphic cubic form $\Xi$ has a finite order $n\in\Z$ at $m_0$. 
Choosing a local holomorphic coordinate $z$ near $m_0$, we can write  $g = w\,|dz|^2$ and $\nabla = d +\om_\nabla$. 
By the main theorem  of~\cite{Haydys15_IsolSing_CMP} there are two possibilities:  
$ w = -|z|^{n+1}\log |z| (C+ o(1))$ or there is $\b<n+1$ such that $w=|z|^\b (C + o(1))$.
In this article we prove the following result.
\begin{thm}
	\label{Thm_spKLocalModels}
	Let $(g,\nabla)$ be a special K\"ahler structure on $M$ possibly singular at $m_0$, where $(M,I)$ is a smooth Riemann surface. Assume that $\Xi$ has a finite order  $n$ at $m_0$.
	Let $z$ be a local holomorphic coordinate near $m_0$.
	\begin{enumerate} 
		\item If $w=r^\b\bigl (C+o(1)\bigr )$, where $\b<n+1$, 
		then there exists $\e>0$ such that  
		\[ 
		\omega_\nabla = \om^c_{\b, \nabla} + o(1)\,d\theta + o(e^{\e\rho})\, d\rho\qquad\mathrm{ for }~\rho\to -\infty.
		\]
		
		\item If $w=-r^{n+1}\log r\bigl (C+o(1)\bigr )$, then there exists $b\in\C$, $|b|=1$,  such that 
		\[ 
		\begin{aligned}
		\omega_\nabla &= \om_{n+1,\nabla} + o(\rho^{-1})\, d\theta + o(\rho^{-1})\, d\rho\qquad \text{ for }~\rho\to -\infty.
		\end{aligned}
		\]
	\end{enumerate}
\end{thm}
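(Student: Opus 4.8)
The plan is to extract an ODE/PDE system for $\omega_\nabla$ from the special Kähler axioms and analyze its asymptotics near $m_0$. First I would recall the structure equations: writing $g = w\,|dz|^2$ and $\nabla = d + \omega_\nabla$, the flatness $d\omega_\nabla + \omega_\nabla\wedge\omega_\nabla = 0$, the condition that $\nabla$ is symplectic (so $\omega_\nabla$ takes values in $\mathfrak{sp}(2,\R) = \mathfrak{sl}(2,\R)$), torsion-freeness, and the special Kähler equation $(\nabla_X I)Y = (\nabla_Y I)X$. As in \cite{Haydys15_IsolSing_CMP}, these combine to give an explicit expression for $\omega_\nabla$ in terms of $w$ and the holomorphic cubic form $\Xi = \xi\, dz^{\otimes 3}$; concretely, the $(1,0)$-part of $\nabla - \nabla^{LC}$ is governed by $\Xi$ and $w$, and $\nabla^{LC}$ itself is determined by $w$. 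So the first step is to write $\omega_\nabla = \omega_{LC} + A$, where $A$ is the difference tensor, and express both $\omega_{LC}$ and $A$ explicitly in the frame $(\partial_x,\partial_y)$ using polar/logarithmic coordinates $(\rho,\theta)$.

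Next I would insert the two asymptotic regimes for $w$. In case (1), $w = r^\beta(C + o(1))$ with $\beta < n+1$: here $\nabla^{LC}$ is, to leading order, the Levi-Civita connection of the flat cone metric $r^\beta|dz|^2$, which gives exactly $\omega^c_{\beta,\nabla}$; the correction from the $o(1)$ factor in $w$ contributes $o(1)\,d\theta + o(e^{\e\rho})\,d\rho$ after one differentiation — the $e^{\e\rho} = r^\e$ gain on the $d\rho$-component coming from the fact that $\partial_\rho \log w$ picks up the derivative of the error while $\partial_\theta$ does not. The difference tensor $A$ involves $\xi$, which has order $n$; since $\beta < n+1$, the ratio $|\xi|\,r^{\text{something}}/w$ carries a strictly positive power of $r$, hence is $o(e^{\e\rho})$, and one checks it only affects the $d\rho$-type component at this order (or can be absorbed). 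This requires being careful about which components of $A$ and of the $w$-error land on $d\theta$ versus $d\rho$; I expect a short computation with the explicit matrices to pin down the $o(1)$ versus $o(e^{\e\rho})$ split.

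In case (2), $w = -r^{n+1}\log r\,(C + o(1))$: now $\log w = (n+1)\rho + \log(-\rho) + \log C + o(1)$, so $\partial_\rho \log w = (n+1) + \rho^{-1} + o(\rho^{-1})$ and $\partial_\theta \log w = o(\rho^{-1})$ (after differentiating the $o(1)$). The Levi-Civita part thus produces the $k = n+1$ "diagonal" pieces of \eqref{Eq_ModelLogSing} — the terms $\tfrac12(k\rI_2\,d\theta + k\mathbbm1_2\,d\rho)$ plus a $\tfrac12\rho^{-1}$-correction — while the difference tensor $A$, whose size is now comparable to $w^{-1}|\xi| \sim r^{-(n+1)}|\xi|/(\log r)$, contributes exactly the $b e^{ik\theta}$-dependent $\rho^{-1}$ terms, with $b \in \C$, $|b|=1$, arising as the suitably normalized leading coefficient of $\xi$ at $m_0$ (order exactly $n$, so the leading coefficient is nonzero, and $|b|=1$ after absorbing its modulus into $C$ — here one uses the precise relation between $C$, the cubic form, and $w$ coming from the main theorem of \cite{Haydys15_IsolSing_CMP}). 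The phase $e^{ik\theta}$ appears because rotating $z$ by $e^{i\alpha}$ rotates the order-$n$ coefficient of $\xi$ by $e^{i(n+1)\alpha} \cdot$(frame rotation), and the remaining errors are $o(\rho^{-1})$ in both components.

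The main obstacle I anticipate is the bookkeeping in case (2): verifying that the explicit matrix-valued one-form \eqref{Eq_ModelLogSing} is precisely what the structure equations force, i.e.\ matching the $\mathfrak{sl}(2,\R)$-valued leading term coming from $\Xi$ against the $be^{ik\theta}$-matrices, and confirming that no $\rho^{-1}$-order contribution is missed or double-counted. A secondary subtlety is justifying that the $o(\cdot)$ error terms in $w$ (and in $\xi/\xi_{\text{lead}}$) survive differentiation with the claimed rates — this needs either an a priori regularity/elliptic estimate for the special Kähler system near the singularity (available since away from $m_0$ the structure is real-analytic, and the relevant equations are elliptic in an appropriate gauge) or a direct bootstrap using the ODE satisfied by $w$ that is derived in \cite{Haydys15_IsolSing_CMP}. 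Once the leading-order identification and the error control are in place, assembling the two cases is routine.
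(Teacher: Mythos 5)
Your overall strategy coincides with the paper's: decompose $\omega_\nabla$ into the Levi--Civita part (determined by $w$) plus the difference tensor (determined by $\Xi$ and $w$), feed in the two asymptotic regimes for $w$, and read off $b$ from the leading coefficient of $\Xi$ with $|b|=1$ coming from the precise relation between that coefficient and the constant $C$ in the metric. In the paper this decomposition appears in the equivalent form \eqref{Eq_Conn1Components}, where the $-du$ terms are the Levi--Civita part and $e^u(dh+a\varphi)$ is the difference tensor, linked to $\Xi$ via \eqref{Eq_XiViaH}; your identification of the $e^{ik\theta}$ phase and of the origin of $|b|=1$ is correct.

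The genuine gap is the step you defer to ``an a priori regularity/elliptic estimate or a direct bootstrap'': the hypothesis $w=r^\beta(C+o(1))$ (resp.\ $w=-r^{n+1}\log r\,(C+o(1))$) is a $C^0$ statement, and your argument requires differentiating it — you need $\partial_\theta\log w=o(1)$ and $r\partial_r\log w=-\beta+o(r^{\epsilon})$ in case (1), and $o(\rho^{-1})$ control on both derivatives of the remainder in case (2). A $C^0$ asymptotic never survives differentiation by itself, and there is no ODE for $w$ to bootstrap from: $u=-\log w$ satisfies the two-dimensional PDE $\Delta u=16|\Xi_0|^2e^{2u}$. The paper's device (Lemma~\ref{lem_asymptotics_u}) is to set $u_1=2u+2\log|\Xi_0|+5\log 2$, which converts this into Liouville's equation $\Delta u_1=e^{u_1}$, and then invoke Nitsche's classification of isolated singularities \cite{Nitsche57_UeberIsolSing}. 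This single step supplies everything you are missing: the derivative rates on the remainder (which are \emph{not} uniform — they degrade as $\beta$ increases toward $n+1$, so the $\epsilon$ in case (1) genuinely depends on $\beta-n$), and the exact value $\tilde v(0)=-2\log 2-\log|\tilde\Xi_0(0)|$ that turns your ``absorb the modulus into $C$'' remark into the identity $|b|=4|\tilde\Xi_0(0)|e^{\tilde v(0)}=1$. Without this lemma or an equivalent substitute, the asymptotics for $\omega_\nabla$ cannot be extracted from the hypotheses. A minor further omission: when $a\neq 0$ the connection form contains the extra term $e^u a\,d\theta$, which must be shown to be $o(1)$; the paper does this by noting $a\neq 0$ forces $n\le -1$, hence $\beta<n+1\le 0$ and $e^u=o(r^{\epsilon})$.
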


Motivated by~\autoref{Thm_spKLocalModels} we adopt the following terminology.

\begin{defn}
	We say that a special K\"ahler structure $(g,\nabla)$ on $B_1^*$ has \emph{a conical singularity} of order $\tfrac 12\b$ at the origin, if $(g,\om_\nabla)$ is asymptotic to $(g^c_\b, \om^c_{\b,\nabla})$.  
	We say that $(g,\nabla)$ has \emph{a logarithmic singularity} of order $\tfrac 12 k,\ k\in \Z$, at the origin, if  $(g,\om_\nabla)$ is asymptotic to $(g_{k}, \om_{k,\nabla})$.  
\end{defn}

Theorem~\ref{Thm_spKLocalModels} yields in particular the following result, which has been announced in~\cite{CalliesHaydys17_AffSpK2D_Arx}.

\begin{corollary}
	\label{Cor_Holonomy}
	Let $(g,\nabla)$ be as in Theorem~\ref{Thm_spKLocalModels}.
	If $(g,\nabla)$ has a logarithmic singularity of order $\tfrac 12(n+1)$,  we put by definition $\b=n+1$.
	Denote by $\Hol(\rS^1,\nabla)$ the holonomy of $\nabla$ along a circle centered at  the origin and oriented in the counterclockwise direction. 
	Then the following holds:
	\begin{itemize}[itemsep=2pt]
		\item If $\b\notin\Z$, then $\Hol(\rS^1, \nabla)$ is conjugate to $\begin{pmatrix}
		\cos\pi\b & -\sin\pi\b\\
		\sin\pi\b & \cos\pi\b
		\end{pmatrix}$;
		\item If $\b\in 2\Z$, then $\Hol(\rS^1, \nabla)$ is trivial or conjugate to $\begin{pmatrix}
		1 & 1\\
		0 & 1
		\end{pmatrix}$;
		\item If $\b\in 2\Z+1$, then $\Hol(\rS^1,\nabla)$ is $-\mathbbm 1$ or conjugate to $\begin{pmatrix}
		-1 & \phantom{-}1\\
		0 & -1
		\end{pmatrix}$.
	\end{itemize} 
	Moreover, if $(g,\nabla)$ has a conical singularity of  order $\tfrac 12\b$, where $\b\in\Z$,  then $\Hol (\rS^1, \nabla)=\mathbbm 1$ if $\b$ is even and $\Hol (\rS^1, \nabla)=-\mathbbm 1$ if $\b$ is odd.
\end{corollary}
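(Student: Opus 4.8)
The plan is to compute $\Hol(\rS^1,\nabla)$ explicitly for the model structures~\eqref{Eq_ModelLogSing} and~\eqref{Eq_FlatConicalStr} and then to transfer the answer to an arbitrary $(g,\nabla)$ via~\autoref{Thm_spKLocalModels}. Write $\rS^1_r=\{|z|=r\}$, oriented counterclockwise. Since $\nabla$ is flat and the circles $\rS^1_r$, $0<r<1$, are freely homotopic in $B_1^*$, the holonomies $\Hol(\rS^1_r,\nabla)$ all represent a single conjugacy class $\mathcal C$ in $\mathrm{Sp}(2,\R)=\mathrm{SL}(2,\R)$, independent of $r$; it thus suffices to pin down $\mathcal C$, and to that end one may let $r\to 0$.

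First the model computations. On $\rS^1_r$ the pull-back of $d\rho$ vanishes, and $\Hol(\rS^1_r,\cdot)$ equals the value at $\theta=2\pi$ of the fundamental solution $\Phi$ of the linear system $\Phi'=-A(\theta)\Phi$, $\Phi(0)=\mathbbm 1_2$, with $A=\iota_{\partial_\theta}\omega_\nabla|_{\rS^1_r}$. For the flat cone~\eqref{Eq_FlatConicalStr} one has $A\equiv\tfrac\b2\rI_2$, so $\Hol(\rS^1_r,\nabla)=\exp(-\pi\b\,\rI_2)$; this equals $(-1)^\b\mathbbm 1_2$ when $\b\in\Z$ and has eigenvalues $e^{\pm i\pi\b}\notin\{\pm1\}$ otherwise. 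For the logarithmic model~\eqref{Eq_ModelLogSing} with $k=n+1$ the matrix $A$ is $\theta$-dependent, but the substitution $\Phi=\exp(-\tfrac k2\,\rI_2\,\theta)\Psi$ removes the $k\rI_2$ summand; identifying $\R^2$ with $\C$ so that $\rI_2$ acts as multiplication by $i$, conjugation by $\exp(-\tfrac k2\rI_2\theta)$ turns $b e^{ik\theta}$ into the constant $i\bar b$, so $\Psi$ satisfies a system with a \emph{constant} coefficient matrix $\tilde B$, which is traceless with $\det\tilde B=1-|b|^2=0$, hence nilpotent, and $\tilde B\neq 0$. Exponentiating and undoing the substitution gives $\Hol(\rS^1_r,\nabla)=(-1)^k\bigl(\mathbbm 1_2-\tfrac{\pi}{\rho}\tilde B\bigr)$, which is $(-1)^k$ times a genuine unipotent matrix and thus lies, for every radius $\rho<0$, in one of the two Jordan conjugacy classes; the sign of the off-diagonal entry is read off from the explicit $\tilde B$.

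Now let $(g,\nabla)$ be arbitrary. By~\autoref{Thm_spKLocalModels}, in the given coordinate $z$ we have $\iota_{\partial_\theta}\omega_\nabla|_{\rS^1_r}=\iota_{\partial_\theta}\omega^{\mathrm{mod}}|_{\rS^1_r}+\varepsilon_r$, where $\omega^{\mathrm{mod}}$ is $\omega^c_{\b,\nabla}$ or $\omega_{n+1,\nabla}$ according to the case and $\|\varepsilon_r\|_{C^0[0,2\pi]}\to 0$ as $r\to 0$. Continuous dependence of solutions of linear systems on their coefficients then yields that $\Hol(\rS^1_r,\nabla)$ tends, as $r\to 0$, to $\exp(-\pi\b\,\rI_2)$ in the conical case and to $(-1)^{n+1}\mathbbm 1_2$ in the logarithmic case. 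If $\b\notin\Z$ the limit is a regular semisimple element of $\mathrm{SL}(2,\R)$, whose conjugacy class is closed; since that class meets $\overline{\mathcal C}$ we obtain that $\mathcal C$ equals it, which is conjugate to the matrix in the first bullet. In every remaining situation---the logarithmic singularity, and the conical singularity of integer order---the limit equals $(-1)^\b\mathbbm 1_2$, and from this alone one concludes only that $\mathcal C$ is $\{(-1)^\b\mathbbm 1_2\}$ or one of the two unipotent classes having $(-1)^\b\mathbbm 1_2$ in its closure, i.e.\ precisely the dichotomies stated in the last two bullets.

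The remaining, and genuinely delicate, point is to decide \emph{which} alternative occurs in the integer cases: the $o(1)$-bound on $\varepsilon_r$ does not separate $\{(-1)^\b\mathbbm 1_2\}$ from a unipotent class, since a unipotent matrix has $(-1)^\b\mathbbm 1_2$ in the closure of its conjugacy class. I would settle this using the special holomorphic coordinate $\xi$ of $(g,\nabla)$ together with the refined classification underlying~\autoref{Thm_spKLocalModels} (cf.~\cite{Haydys15_IsolSing_CMP}): $\Hol(\rS^1,\nabla)$ is represented by the monodromy of the period vector $(\xi,\tilde\xi)$ around the origin, and for a \emph{conical} singularity of order $\tfrac12\b$ with $\b\in\Z$ this classification gives $\xi=z^{(\b+2)/2}\bigl(c+o(1)\bigr)$ with $c\neq 0$ and no logarithmic term, whence the monodromy of $(\xi,\tilde\xi)$ is exactly multiplication by $(-1)^\b$ and $\Hol(\rS^1,\nabla)=(-1)^\b\mathbbm 1_2$; in the \emph{logarithmic} case a genuine $z^{(\b+2)/2}\log z$ contribution forces $\mathcal C$ to be the corresponding nontrivial Jordan class, which the model computation above pins down completely. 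Together with the previous steps this proves the Corollary; the main obstacle is precisely this last step, which requires the logarithm-free (respectively logarithm-bearing) leading asymptotics of the developing map, not merely the first-order control on the connection form furnished by~\autoref{Thm_spKLocalModels}.
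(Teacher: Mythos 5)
Your first two steps---computing the model holonomies and using uniform convergence of $\iota_{\partial_\theta}\omega_\nabla$ on $\rS^1_r$ to conclude that the limit of $\Hol(\rS^1_r,\nabla)$ lies in the closure of the (fixed) conjugacy class $\mathcal C$---are correct and are essentially the paper's argument, phrased at the level of the full holonomy matrix rather than its trace (the paper observes that $\tr P_{\gamma_r}$ is constant in $r$ and converges to the trace of the model holonomy, and that in $\SL(2,\R)$ the trace determines the closure of the conjugacy class). Either formulation yields the first bullet and the dichotomies in the second and third bullets.

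The genuine gap is in your last step, which you correctly identify as the crux: deciding that $\Hol(\rS^1,\nabla)=\pm\mathbbm 1$ for a \emph{conical} singularity of integer order. You settle this by asserting that the special coordinate satisfies $\xi=z^{(\b+2)/2}\bigl(c+o(1)\bigr)$ ``with no logarithmic term,'' but no such refined classification is available: \autoref{Thm_spKLocalModels} controls only the connection $1$-form (to order $o(1)\,d\theta+o(e^{\e\rho})\,d\rho$), and the main theorem of the cited reference controls only the conformal factor of the metric; neither gives the logarithm-free leading asymptotics of the developing map, and an $o(1)$ multiplicative error in $\xi$ would in any case not by itself exclude a lower-order $\log z$ contribution that produces unipotent monodromy. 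The paper closes exactly this gap by a direct ODE estimate on the \emph{radial} parallel transport: writing the radial transport equation as $\dot\zeta=o(s^{\e-1})\zeta$ after the substitution $\zeta=s^{\b/2}\eta$, a Gronwall-type integration shows $r^{-\b/2}P_r$ is bounded with invertible subsequential limits (invertibility from $\det(r^{-\b/2}P_r)\to 1$ since $\nabla$ is symplectic), and passing to the limit in $P_{\gamma_1}=(r^{-\b/2}P_r)\comp P_{\gamma_r}\comp(r^{-\b/2}P_r)^{-1}$ with $P_{\gamma_r}\to\pm\mathbbm 1$ gives $P_{\gamma_1}=\pm\mathbbm 1$. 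Separately, your claim that in the logarithmic case the holonomy is necessarily the nontrivial Jordan class overreaches: the corollary deliberately leaves that dichotomy unresolved, and nothing in the stated asymptotics forces a nonzero $\log$-coefficient in the developing map of a general structure asymptotic to $\om_{n+1,\nabla}$.
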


Proofs of these statements as well as some corollaries can be found in Section~\ref{Sect_Proofs}.
Numerous local examples of isolated singularities of affine special K\"ahler structures are contained in~\cites{Haydys15_IsolSing_CMP,CalliesHaydys17_AffSpK2D_Arx}. 
Continuous families of special K\"ahler structures on $\CP^1$ with isolated singularities can be found  in~\cite{CalliesHaydys17_AffSpK2D_Arx}*{Sect.\,3.2}.

The reader may wonder why the holonomy of the flat symplectic connection around the singularity depends on $\b$ only, i.e., on the asymptotic of the \emph{metric}. 
This seems particularly strange in view of the following: Since $\Xi$ can be seen as measuring the difference between $\nabla$ and the Levi-Civita connection $\nabla^{LC}$~\cite{Freed99_SpecialKaehler} and $\Xi$ may have a pole at the origin, one might expect that $\nabla$ and $\nabla^{LC}$ are unrelated in general.  
However, it follows from~\autoref{Thm_spKLocalModels} that this is not the case: The leading terms of $\om_\nabla$ and $\omega_{LC}$ coincide. 
Indeed, this is immediate in the conical case and in the logarithmic one this follows from the observation that the Levi-Civita connection of $g_k$ is $\tfrac 12 (k +\rho^{-1})(\mathbbm 1_2\, d\rho + I_2\, d\theta)$.
In fact, in both cases we have $\om_\nabla - \om_{LC}=o(r^{-1})$ with respect to the flat reference metric on $B_1^*$.
Thus, the singularity of the cubic holomorphic form reflects the singularity of $g$  rather than the deviation of $\nabla$ and $\nabla^{LC}$ near an isolated singularity. 

\medskip
\textsc{Acknowledgements.} Both authors were partially supported by the DFG through SFB 701. 
The second named author was also partially supported by the Simons Collaboration on Special Holonomy in Geometry, Analysis, and Physics.

\section{Preliminaries}
\label{Sect_Preliminaries}

The main purpose of this section is to fix notations.
Details can be found for instance in~\cite{Freed99_SpecialKaehler,Haydys15_IsolSing_CMP}; See also~\cite{CalliesHaydys17_AffSpK2D_Arx} for an elementary exposition.


\paragraph{Conventions.}  Throughout this paper we adopt the convention 
\[
\a \wedge \b = \frac 12 \bigl ( \a\otimes\b - \b \otimes \a \bigr ),
\]
where $\a,\b$ are 1-forms, i.e., we think of differential forms as tensors. 
This convention implies $2\, dx\wedge dy\, (\partial_x, \partial_y)=1$;
In particular, the K\"ahler form of the flat metric $|dz|^2 = dx^2 + dy^2 $ on $\R^2$ is $|dz|^2(I\cdot,\cdot) =2\, dx\wedge dy$.
This also means that $(p,q)$ are Darboux coordinates for a symplectic form $\om\in\Omega^1(\R^2)$, if $\om = 2\, dp\wedge dq$. 

This convention coincides with the one of~\cite{Haydys15_IsolSing_CMP}, but differs from others, which can be found in the literature. 
\medskip

A special K\"ahler structure can be conveniently described locally in terms of special holomorphic coordinates.
The notion of special holomorphic coordinates makes sense in any dimension, but here we restrict ourselves to the case of real dimension two. 
 
Following~\cite{Freed99_SpecialKaehler},  we say that a holomorphic coordinate $Z$ is \emph{special}, if $p=\Re Z$ is flat, i.e., $\nabla dp=0$.
Two special holomorphic coordinates $Z$ and $W$ are said to be conjugate, if  $p$ and $ q:=-\Re W$ are Darboux coordinates for $\om$, i.e., $\om =2\,dp\wedge dq$.
Such coordinates  always exist in a neighborhood of any regular point~\cite{Freed99_SpecialKaehler}.
This does not need to be the case if the point is singular~\cite{CalliesHaydys17_AffSpK2D_Arx}*{Sect.\,2.4}.

One way to construct a pair of conjugate holomorphic coordinates in a neighborhood of some point $m\in M$ is as follows.
Choose local coordinates so as to identify a neighborhood of $M$ with the disc in $\R^2$ of radius $1$.
Choose a pair of covectors $(\a_1,\a_2)$ from $T_mM$ such that $\om_m=2\,\a_1\wedge\a_2$. 
Use parallel transport along radial lines to obtain a pair of one-forms, which are closed, since $\nabla$ is torsion-free. 
Hence, these forms are in fact exact, say $(dp, dq)$, so that $(p,q)$ are Darboux coordinates.
Then $Z$ and $W$ can be found from the relations: $\Re Z =p$ and $\Re W=-q$.

A useful object, which can be attached to a special K\"ahler structure, is the so called holomorphic cubic form, which is defined as follows.
Consider the fiberwise projection $\pi^{(1,0)}$ onto the $T^{1,0}M\subset T_\C M$ as a 1-form with values in $T_{\C}M$. Since this form vanishes on vectors of type $(0,1)$, we can think of $\pi^{(1,0)}$  as an element of $\Om^{1,0}(M; T_\C M)$.
Then, the \emph{holomorphic cubic form} is
\[
\Xi\, :=\, -\om \bigl (\pi^{(1,0)}, \nabla\pi^{(1,0)} \bigr )\in H^0\bigl (M; \mathrm{Sym}^3 T^*M \bigr ).
\]  
In terms of conjugate special holomorphic coordinates $(Z,W)$, \, $\Xi$ can be expressed as follows:
\[
\Xi =\frac{1}{4}\frac{\partial^2 W}{\partial Z^2}\, dZ^3.
\]

Let us now provide a  description of special K\"ahler structures in terms of solutions to a system of nonlinear PDEs, which were first obtained in~\cite{Haydys15_IsolSing_CMP}. 
The analysis of these PDEs is a crucial ingredient of the proof of~\autoref{Thm_spKLocalModels}.   


Write a special K\"ahler metric $g$ on $B_1^*$ in the form 
\[
g\, =\, e^{-u}|d z|^2.
\]
With respect to the  trivialization $(\partial_x, \partial_y)$ of $TB_1^*$  the connection $\nabla$ is described by its connection $1$-form $\om^{}_\nabla\in\Om^1\bigl(\Om;\gl(2,\R)\bigr)$. 
A computation shows that $\nabla$ is torsion-free and satisfies~\eqref{Eq_SpKaehlerCondition} if and only if $\om^{}_\nabla$ can be written in the form
\begin{equation}
\label{Eq_Conn1Form}
\om^{}_\nabla\, = \, 
\begin{pmatrix}
\om^{}_{11} & -*\om^{}_{11}\\
*\om^{}_{22} & \phantom{-* }\om^{}_{22}
\end{pmatrix},
\end{equation}
where $*$ denotes the Hodge operator \wrt the flat metric $|dz|^2=dx^2 + dy^2$.
Furthermore, one can show that  there is a pair $(h, a)\in C^\infty (B_1^*)\times \R$ such that 
\begin{equation}
  \begin{split}\label{Eq_Conn1Components}
		2\om^{}_{11} =  e^u( d h +a\varphi)-d u\qquad\text{and}\qquad
		2\om^{}_{22} =  -e^u(d h +a\varphi)- d u.
  \end{split}
\end{equation}
Here $\varphi =- d\theta$ is a generator of the first de Rham cohomology group of $B_1^*$.
A computation (\cite{Haydys15_IsolSing_CMP}*{Cor 2.3}) shows that $\nabla$ is flat if and only if $(h,u, a)$ satisfies
 \begin{equation}
 \label{Eq_SpKaehlerHU}
 \Delta h\, =\, 0\qquad\text{and}\qquad \Delta u\, =\, |d h + a\varphi|^2 e^{2u}.
 \end{equation}
 Here $\Delta = \partial^2_{xx} +\partial^2_{yy}$ is the (non-positive) Laplace operator \wrt the flat metric of $B_1^*$. 
 
 In terms of solutions of~\eqref{Eq_SpKaehlerHU} the associated holomorphic cubic form can be expressed as follows
 \begin{equation}
   \label{Eq_XiViaH}
  \Xi = \frac{1}{2}\left(\frac{a}{2z} - \frac{\partial h}{\partial z} i\right) dz^3.
 \end{equation}
 
 \begin{remark}
  Note that if $(u,h,a)$ determines an affine special K\"ahler structure with $g=e^{-u} |dz|^2$, then, for $C>0$,  the triple $(u -\log C, Ch, Ca)$ determines a new special K\"ahler structure with the rescaled metric $Cg = C e^{-u}|dz|^2$.
  In particular, the  connection $1$-forms agree and the cubic holomorphic form scales by the same factor $C$ as the metric. 
 \end{remark}

\begin{remark}
	\label{Rem_HUAandChangeOfcoord}
	Observe that the triple $(h, u, a)$ depends on the choice of the local coordinate. 
	We wish to find the triple $(\tilde h, \tilde u, \tilde a)$ corresponding to the new local coordinate $\tilde z = \l\cdot z = e^{i\theta_0}\cdot z$, where $\l$ is a complex number of unit length. 
	Write $h=h_0 + b\log r$, where  $h_0$ is the real part of some holomorphic function $f$, which may have a pole at the origin, and $b\in\R$.
	Then a computation shows that 
	\[
	\tilde h(\tilde z, \bar{\tilde z}) = \Re\bigl ( \l^2 f(\l \tilde z) \bigr) +\Re\bigl ( \l^2 (b+ai)\bigr)\log |\tilde z|,\quad \tilde a = \Im\bigl ( \l^2 (b+ai) \bigr),
	\]
	and $\tilde u = u$.
	
	Notice also that the rescaling $\tilde z = \l \cdot z$ with $\l \in\R_{>0}$ leads to $(\tilde h, \tilde u, \tilde a) = (h, u +2\log \l, a)$. 
\end{remark}

\section{Models of singular special K\"ahler structures}
\label{Sect_Models}

\subsection{Flat cones}
Consider $\C^*$  as a special K\"ahler manifold equipped with the flat conical structure~\eqref{Eq_FlatConicalStr}.
Despite its simplicity, it is instructive to compute special holomorphic coordinates and the holonomy of this structure.

For $r\in (0,1)$ let $\sigma\colon [r,1]\to \C^*$ be the straight line segment,  $\sigma(s) =s$.
A straightforward computation yields that the parallel transport $P_\sigma$ along $\sigma$ is given by
\[
P_\sigma(dx)= r^{-\b/2}\, dx \qquad\text{and} \qquad P_\sigma(dy)= r^{-\b/2}\, dy.
\]
Similarly, if $\gamma_r\colon [0, \psi]\to \C^*$ is the arc of angle $\psi$ starting at the point $r$, then for the parallel transport along $\gamma_r$ we have 
\[
P_{\gamma_r}(dx) = \cos (\tfrac\b 2\psi) \, dx - \sin(\tfrac\b 2\psi)\, dy\qquad\text{and} \qquad
P_{\gamma_r}(dy) = \sin (\tfrac\b 2\psi) \, dx + \cos(\tfrac\b 2\psi)\, dy,
\]
i.e., $P_{\gamma_r}$ is the rotation by the angle $\tfrac \b 2\psi$.
In particular,
\[
\Hol(\rS^1;\nabla) =
\begin{pmatrix}
\cos\pi\b & -\sin\pi\b\\
\sin\pi\b & \cos\pi\b
\end{pmatrix}.
\]

Denote by $P_{\bar{\sigma}}$ the parallel transport from $1$ to $r$ along the segment $[r,1]$. 
The 1-forms $(dp, dq)=\bigl (P_{\gamma_r}\comp P_{\bar{\sigma}}(dx), P_{\gamma_r}\comp P_{\bar{\sigma}}(dy) \bigr )$  are well-defined on $\C\setminus\R_{\ge 0}$. 
Since $\nabla$ is torsion-free, these forms are closed and therefore exact (hence the notation). 
Explicitly, we have 
\[
dp=\Re \bigl ( z^{\b/2 } dz\bigr )\qquad\text{and} \qquad dq = \Im \bigl (z^{\b/2 } dz\bigr ).
\]
Hence, if $\b\neq -2$ we obtain that 
\[
(Z, W)=\frac 2{\b +2}\Bigl (  z^{\tfrac \b 2 +1} ,\  iz^{\tfrac \b 2 +1}\Bigr )
\]
are conjugate special holomorphic coordinates.
For $\b =-2$, we obtain the following pair of conjugate coordinates:
\[
(Z,W) = (\log z, i\log z),
\]  
where $\log z = \log r +i\theta$.


In both cases we have the relation $W=iZ$, which implies that the holomorphic cubic form $\Xi$ vanishes, which is clear anyway, since $\nabla =\nabla^{LC}$. 

Notice that flat cones do not satisfy the hypotheses of~\autoref{Thm_spKLocalModels} since $\Xi$ does not have a finite order at the origin. 
Nevertheless, as we show below they serve as models for special K\"ahler structures, whose associated holomorphic cubic forms have finite orders at isolated singularities.

\subsection{Logarithmic singularities}

In this section we describe the model special K\"ahler structures on $B_1^*$ with logarithmic singularities mentioned in the introduction. 
We first present a more computational approach much in the spirit of the previous subsection. 
A somewhat less computational approach, which also establishes links between $(g_k,\om_{k,\nabla})$ as  $k$ varies, is given afterwards.   

For simplicity of exposition, in the main body of this section we  consider only  the case $C=1=b$ in~\eqref{Eq_ModelLogSing}.
In this case the connection 1-form is given by 
\begin{equation}
\label{Eq_ModelLogSingNormalized}
\begin{aligned}
\omega_{k,\nabla} &= 
\frac 12
\left(
k\rI_2 + 
\begin{pmatrix} 
\sin k\theta & -1+\cos k\theta\\    
1 + \cos k\theta &  -\sin k\theta
\end{pmatrix} \rho^{-1}
\right) d\theta  \\
&+  
\frac 12\left(
k\mathbbm 1_2	+
\begin{pmatrix}
1 - \cos k\theta & \sin k\theta\\
\sin k\theta & 1+\cos k\theta
\end{pmatrix}\rho^{-1} 
\right) d\rho.
\end{aligned}
\end{equation}
The general case is easy to obtain by a suitable choice of $h$ and $a$ below, see~\autoref{Rem_HUAlogModel} for explicit formulae.

\paragraph{First approach: Direct computation.}
	Pick an integer $k\neq 0$ and consider the functions  
	\[
	h(z,\bar z) := \Re\left(\tfrac{z^{k}}{k}\right)\qquad\text{and}\qquad u(z,\bar z):=-k\log|z| - \log|\log|z||, 
	\]
	which are smooth on $B_1^*$.
	Clearly, $h$ is harmonic whereas for $u$ we have
	\[ 
	\Delta u = - \Delta(\log|\log|z||) = \frac{1}{|z|^2(\log|z|)^2}. 
	\]
	Using $dh =  r^{k-1}\cos(k\theta)\, dr - r^{k}\sin(k\theta)\,d\theta$, we obtain $|dh|^2 =  r^{2(k-1)} = |z|^{2(k-1)}$, which yields in turn
	\[ 
	\Delta u = \frac{1}{|z|^2(\log|z|)^2} = |z|^{2(k-1)} \frac{1}{|z|^{2k}(\log|z|)^2} = |dh|^2 e^{2u}.
	\] 
	Therefore  $(h,u,0)$ satisfies~\eqref{Eq_SpKaehlerHU}; Hence, the metric $g = e^{-u} |dz|^2 = -|z|^k\log|z| |dz|^2$ is special K\"ahler.
	
	Using~\eqref{Eq_Conn1Components}, we compute the components of the connection 1-form:
	\begin{align*}
		2\omega_{11} &= - (\log r)^{-1}\left(r^{-1}\cos(k\theta)dr - \sin(k\theta)d\theta\right) + \left(\frac{k}{r} + \frac{1}{r\log r}\right) dr, \\
		2\omega_{22} &=  (\log r)^{-1}\left(r^{-1}\cos(k\theta)dr - \sin(k\theta)d\theta\right) + \left(\frac{k}{r} + \frac{1}{r\log r}\right) dr.
	\end{align*}
This shows that $g=-r^k\log r |dz|^2$ and~\eqref{Eq_ModelLogSingNormalized} is indeed a special K\"ahler structure.

Similarly, for $k=0$ one checks that $h(z,\bar z):=\log|z|$, $u(z):= -\log|\log|z||$, and $a=0$ determine the special K\"ahler structure with the metric $g_0 = -\log|z| |dz|^2$ and connection $1$-form $\omega_{0,\nabla}$ from~\eqref{Eq_ModelLogSingNormalized}.
\medskip

Furthermore, the parallel transport $P_{\sigma}$ of a $1$-form $\eta_1\, dx +\eta_2\, dy$ along the segment $\sigma = [r_0,r_1]\subset \C^*$, where $0<r_0<r_1<1$, is described by the system of ODEs
 \begin{equation*}
 	\dot\eta_1(r) = \frac{k}{2r} \eta_1(r) \qquad\text{and}\qquad
 	\dot\eta_2(r) = \frac{1}{2r}\left(k + \frac{2}{\log r}\right) \eta_2(r),
 \end{equation*}
which can be easily solved explicitly. 
This yields:
\[
P_\sigma(dx)= \left(\frac{r_1}{r_0}\right)^{\frac{k}{2}} dx \qquad\text{and} \qquad P_\sigma(dy)= \left(\frac{r_1}{r_0}\right)^{\frac{k}{2}}\frac{\log r_1}{\log r_0} dy.
\]	

For the  parallel transport along the arc $\gamma_r\colon [0,\theta]\to \C^*$, $\gamma_r(s) = re^{is}$, we obtain the system 
\[ 
\begin{pmatrix} \dot\eta_1(s) \\\dot\eta_2(s)\end{pmatrix} = \tfrac{1}{2\log r}
 \left (
 \begin{pmatrix} 
	 \sin(ks) & \cos(ks)\\ 
	 \cos(ks)& -\sin(ks) 
 \end{pmatrix} - \bigl (k\log r + 1\bigr )\rI_2
 \right )
 \begin{pmatrix} \eta_1(s) \\ \eta_2(s)\end{pmatrix}, 
\] 
One can check that the solution subject to the  initial condition $\eta_1(0)=1, \eta_2(0)=0$ is given by:
	\begin{equation*}
		\eta_1(s) = \cos\left(\tfrac{k}{2} s\right), \qquad
		\eta_2(s) = -\sin\left(\tfrac{k}{2} s\right). 
	\end{equation*}
For the initial condition $\eta_1(0)=0, \eta_2(0) = 1$, the corresponding solution is
	\begin{equation*}
		\eta_1(s) = \sin\left(\tfrac{k}{2} s\right) + \tfrac{s}{\log(r)} \cos\left(\tfrac{k}{2}s\right), \qquad
		\eta_2(s) = \cos\left(\tfrac{k}{2} s\right) - \tfrac{s}{\log(r)} \sin\left(\tfrac{k}{2}s\right). 
	\end{equation*}
	Hence, the holonomy around the origin with a basepoint $r_1$ is given by
	\begin{equation}
	 \label{Eq_HolLogSingEx}
		\Hol(\rS^1_{r_1},\nabla) 
		= (-1)^{k}\begin{pmatrix} 1 & \frac{2\pi}{\log(r_1)} \\ 0 & 1\end{pmatrix}.
	\end{equation}
	
\begin{remark}
	\label{Rem_HolDependsOnr}
 At this point the reader may wonder why the holonomy of $\nabla$ depends on $r_1$. 
 While $\rS^1_{r_1}$ and $\rS^1_{r_2}$ are of course homotopic as free loops in $B_1^*$ for any $r_1,r_2\in (0,1)$,  these loops are based at different points if $r_1\neq r_2$. 
 Therefore, $\Hol(\rS^1_{r_1},\nabla)$  and $\Hol(\rS^1_{r_2},\nabla)$	are \emph{conjugate}, but do not need to be equal.
\end{remark}
	
	Just like in the previous subsection, we compute the parallel transport of $\alpha_1 = r_1^{\frac{k}{2}} dx\in T_{r_1}^*B^*_1$ and $\alpha_2 = -r_1^\frac{k}{2}\log r_1 dy\in T_{r_1}^*B^*_1$ and obtain
	\begin{align*} 
	(dp, dq)
		=\left (\Re\bigl(z^{\frac{k}{2}} dz\bigr),\;-\Im\bigl(\log(z) z^{\frac{k}{2}}dz\bigr)\right ).
	\end{align*}

	 Hence we obtain that for $k\neq -2$ the pair
	\begin{equation}
	\label{Eq_SpecialCoordN}
	\bigl (Z, W\bigr )=\Bigl ( \tfrac{2}{k+2}z^{\frac{k+2}{2}} ,\   -\tfrac{2i}{k+2} z^{\tfrac{k+2}{2}}\left(\log z - \tfrac{2}{k+2} \right)\Bigr )
	\end{equation}
	consists of special conjugate coordinates on $B_1^*\setminus\R_{>0}$. 
		
	Similarly, for $k=-2$, we obtain conjugate coordinates
	\[
	(Z,W) = \Bigl( \log(z),\tfrac{1}{2i}\log(z)^2 \Bigr).
	\]
	
	Furthermore, we have 
	\[ \Xi = -\frac{i}{2}\frac{\partial h}{\partial z} dz^3 = -\frac{i}{4k} \frac{\partial z^{k}}{\partial z} dz^3 = -\tfrac{i}{4} z^{k-1} dz^3. \]
	
	In particular, $\Xi$ has order $n=k-1$ at the origin.

	\begin{remark}\label{Rem_HUAlogModel}
	 If $b$ and/or $C$ in~\eqref{Eq_ModelLogSing} is not necessarily $1$, the triple $(h,u,a)$ is given by
	\begin{align*}
	h(z,\bar z) :=C\Re\left(b~\tfrac{z^{k}}{k}\right), \quad
	u(z,\bar z):= -k\log|z| - \log|\log|z||-\log C,\qand\quad
	a:=0
	\end{align*}
	for $k\neq 0$ and by
	\begin{align*}
	h(z,\bar z) := C\Re(b)\log|z|, \quad
	u(z,\bar z):= - \log|\log|z||-\log C,\qand\quad 
	a:=C\Im(b)
	\end{align*}
	for $k=0$.
	\end{remark}

\paragraph{Second approach: A fundamental example and its pull-backs.}
	Consider the special K\"ahler structure on $B_1^*$ determined by the triple $(u,h,a)$, where
	\begin{equation}
	\label{Eq_HUinFundExample}
	u=\log |z| -\log \bigl | \log|z| \bigr|,\qquad h=-\Re z^{-1} =-\frac x{x^2 + y^2},
	\end{equation}
	and $a=0$. 	
	In other words, the corresponding metric and holomorphic cubic form are given by
	\begin{gather}
		g= -|z|^{-1}\log |z|\; |dz|^2,\label{Eq_FundSpKMetrWithLogSing}\\
		\Xi = -\frac i4 z^{-2}\, dz^3.\label{Eq_FundCubicForm}
	\end{gather}
	This is a special case of the setup of the previous approach  corresponding to $k=-1$.
	
	We claim that a pair of conjugate special holomorphic coordinates on $B_1^*\setminus\R_{>0 }$ is given  by 
	\[
	(Z,W)=\bigl ( 2\sqrt z,- 2i\sqrt z (\log z -2) \bigr).
	\]
	Here $\sqrt z = \sqrt r e^{i\theta/2}$ in polar coordinates.
	
	Let us check that $(Z,W)$ are indeed special coordinates. 
	Denoting
	\[
	p:=2\Re \sqrt z = 2\sqrt r\cos\tfrac \theta 2\quad\text{and}\quad
	q:=-\Re W = - 2\sqrt r \bigl ( \theta\cos \tfrac \theta 2 +\log r  \sin\tfrac \theta 2 -2\sin\tfrac \theta 2 \bigr ),
	\]
	we obtain
	\begin{equation}
	\label{Eq_dpdq}
	\begin{gathered}
	dp = \frac 1{\sqrt r} \bigl (  \cos \tfrac \theta 2\; dr -r\sin\tfrac \theta 2\; d\theta \bigr),\\
	dq = -\frac 1{\sqrt r}  \bigl (  (\theta \cos \tfrac \theta 2 +\log r \sin\tfrac \theta 2)\; dr + r(\log r \cos \tfrac \theta 2 -\theta  \sin\tfrac \theta 2)\; d\theta\bigr).
	\end{gathered}
	\end{equation}
	A straightforward computation yields 
	\[
	dp\wedge dq = -\log r\; dr\wedge d\theta
	= - (r^{-1}\log r)\, rdr\wedge d\theta,
	\]
	which shows that $(p,\,q)$ are Darboux coordinates for the K\"ahler form of~\eqref{Eq_FundSpKMetrWithLogSing}.
	We leave to the reader to check that $(p,q)$ are flat.
	
	It is clear from~\eqref{Eq_dpdq} that the holonomy of the flat symplectic connection $\nabla$ as we go once around a circle centered at the origin is conjugate to 
	\[
	\begin{pmatrix}
	-1 & 2\pi\\
	\phantom{-}0 & -1
	\end{pmatrix}\sim
	\begin{pmatrix}
	-1 & \phantom{-}1\\
	\phantom{-}0 & -1
	\end{pmatrix}.
	\]
Here ``$\sim$" means that the above two matrices are conjugate.

Let $f\colon B_1^*\to B_1^*, \ f(z)=z^{k+2}$, where $k\in\Z,\ k\neq -2$. 
For the pull-back of~\eqref{Eq_FundSpKMetrWithLogSing}, \eqref{Eq_FundCubicForm} we obtain:
	\[
	f^*g =  -(k+2)^3 |z|^{k}\log |z|\;  |dz|^2\quad\text{and}\quad 
	f^*\Xi = -\frac {(k+2)^3 i}4\, z^{k-1}\; dz^3.
	\]
Then clearly $(f^*g, f^*\nabla)$ is again a special K\"ahler structure. 
Thus, we immediately obtain that special holomorphic coordinates are $(f^*Z, f^*W)$, which yields~\eqref{Eq_SpecialCoordN} up to the multiplication by $(k+2)^3$; This factor corresponds to different  normalizations of $f^*g$ and $g_{k}$.

A straightforward albeit laborious  computation shows that the connection 1-form of $f^*\nabla$ is given by~\eqref{Eq_ModelLogSingNormalized}.
In this way one can obtain all structures $(g_{k}, \om_{k,\nabla})$ except for $k=-2$, which must be considered separately just like in the previous approach. 
We omit the details.

\begin{remark}
It is not true that the connection form of $f^*\nabla$ is represented by $f^*\omega_\nabla$ in the sense of~\cite{Haydys15_IsolSing_CMP}. 
The reason is simple: The pair of real coordinates\footnote{$(a,b)$ can be viewed as local coordinates on the $k$-sheeted covering of $B_1^*$.} $(a,b)$, where $a+bi=z^{k+2}=f(z)$, also gives rise to a trivialization of the tangent bundle and $\omega_\nabla$ depends on this trivialization, cf.~\autoref{Rem_HUAandChangeOfcoord}.
Rather, the representation of $f^*\nabla$ can be obtained from $f^*\omega_\nabla$ by applying a gauge transformation.
It is also not true that $(f^*g, f^*\nabla)$ is represented by the triple $(f^*h,f^*u,0)$, where $(h,u)$ are as in~\eqref{Eq_HUinFundExample}.
\end{remark}

\section{Proofs of the main results and some corollaries}
\label{Sect_Proofs}

Write $\Xi=\Xi_0\, dz^3$, where $\Xi_0$ is a holomorphic function on $B_1^*$, and denote $\tilde \Xi_0(z):= z^{-n}\Xi_0(z)$. 
Observe that the order of $\tilde\Xi_0$ at the origin vanishes, i.e., $\tilde\Xi_0 (0)$ is well defined and does not vanish.

\begin{lem}\label{lem_asymptotics_u}
Assume that the holomorphic cubic form $\Xi$ does not have an essential singularity at the origin and denote $n:=\mathrm{ord}_0\,\Xi\in\Z$.
Then there exists $\beta\le n+1$ such that 
\[
\begin{aligned}
u &=-\b \log |z| + v, &&\text{ if } \beta<n+1,\\
u &=-(n+1)\log |z|-\log \bigl |\log |z| \bigr| +\tilde v, &\qquad&\text{ if } \beta=n+1,
\end{aligned}
\]
where the remainder functions $v$ and $\tilde v$ are continuous on $B_1$ and $\tilde v (0) = -2\log 2 - \log |\tilde \Xi_0(0)|$.
Moreover, we have
\begin{equation*}
  \begin{aligned}
  &\partial_x v,\ \partial_yv \text{ are continuous at }0, &\qquad&\text{ if } \b<n+\tfrac 12,\\
  &\partial_x v,\ \partial_yv=O(1), &&\text{ if } \b=n+\tfrac 12,\\
  &\partial_x v,\ \partial_yv=O(|z|^{1-2(\b -n)}), &&\text{ if } \b\in (n+\tfrac 12, n+1),\\
  &\partial_x \tilde v,\ \partial_y\tilde v=O\bigl(|z|^{-1}(\log |z|)^{-2} \bigr), &&\text{ if } \b= n+1.
  \end{aligned}
\end{equation*}
\end{lem}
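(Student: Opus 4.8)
The plan is to substitute the leading--order asymptotics supplied by the main theorem of~\cite{Haydys15_IsolSing_CMP} into the Liouville--type equation~\eqref{Eq_SpKaehlerHU} and then bootstrap. The first observation is that~\eqref{Eq_SpKaehlerHU} can be rewritten purely in terms of $\Xi$. Since $h$ is harmonic, $\partial h/\partial z$ is holomorphic, and comparing with~\eqref{Eq_XiViaH} gives $\partial h/\partial z+\tfrac{ia}{2z}=2i\,\Xi_0$; hence $dh+a\varphi=4\,\Re(i\,\Xi_0\,dz)$ and $|dh+a\varphi|^2=16\,|\Xi_0|^2=16\,|z|^{2n}|\tilde\Xi_0(z)|^2$, so that the second equation in~\eqref{Eq_SpKaehlerHU} becomes
\[
\Delta u \;=\; 16\,|z|^{2n}\,|\tilde\Xi_0(z)|^2\,e^{2u},
\]
with $\tilde\Xi_0$ holomorphic and $\tilde\Xi_0(0)\neq 0$.

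Consider first the case $\beta<n+1$, where $e^{-u}=|z|^{\beta}(C+o(1))$ by~\cite{Haydys15_IsolSing_CMP}. Put $v:=u+\beta\log|z|$; then $v$ is bounded on $B_1^*$ with $v\to-\log C$, and it solves $\Delta v=f$ with $f:=16\,|z|^{2(n-\beta)}|\tilde\Xi_0|^2 e^{2v}$, so $|f|\le C'|z|^{\gamma}$ where $\gamma:=2(n-\beta)\in(-2,\infty)$. Such $f$ lies in $L^p_{\mathrm{loc}}$ for some $p>1$; solving $\Delta(\cdot)=f$ on a smaller disc and using that $v$ minus this solution is bounded harmonic on $B_1^*$ shows $v$ extends continuously over $0$. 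If $\gamma>-1$ (i.e.\ $\beta<n+\tfrac12$) one even has $f\in L^p_{\mathrm{loc}}$ with $p>2$, so $v\in W^{2,p}_{\mathrm{loc}}\subset C^{1,\alpha}$ and $\partial_x v,\partial_y v$ are continuous at $0$. If $\gamma\le-1$, first obtain Hölder continuity of $v$ near $0$ from $f\in L^p_{\mathrm{loc}}$, $p\in(1,2)$; then $f=c_1|z|^{\gamma}+O(|z|^{\gamma+\mu})$ with $c_1:=16|\tilde\Xi_0(0)|^2e^{2v(0)}>0$ and some $\mu>0$, and estimating the first derivatives of the Newtonian potential of the right--hand side by splitting the integral at $|\zeta|=2|z|$ — using $\int_{|\zeta|<2|z|}|\zeta|^{s}|z-\zeta|^{-1}\,dA=O(|z|^{s+1})$ for $s>-2$ and $\int_{2|z|\le|\zeta|\le\rho}|\zeta|^{s-1}\,dA=O(|z|^{s+1})$ for $s<-1$ — gives $\partial_x v,\partial_y v=O(|z|^{\gamma+1})=O(|z|^{1-2(\beta-n)})$, which reduces to $O(1)$ when $\beta=n+\tfrac12$. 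It is essential here to use the cancellation in $\partial_z N[f](z)=\tfrac1{4\pi}\int f(\zeta)(z-\zeta)^{-1}\,dA(\zeta)$; the crude absolute--value bound loses a logarithm.

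In the case $\beta=n+1$, where $e^{-u}=-|z|^{n+1}\log|z|\,(C+o(1))$, put $\tilde v:=u+(n+1)\log|z|+\log|\log|z||$; then $\tilde v$ is bounded, $\tilde v\to\ell:=-\log C$, and, using $\Delta(-\log|\log|z||)=K$ with $K:=|z|^{-2}(\log|z|)^{-2}$, it solves
\[
\Delta\tilde v \;=\; K(z)\bigl(16\,|\tilde\Xi_0(z)|^2\,e^{2\tilde v}-1\bigr).
\]
To identify $\ell$, let $m(r)$ be the mean of $\tilde v$ over $\{|z|=r\}$; Green's formula on annuli, boundedness of $m$, and $\int_{B_r}K\,dA=2\pi/|\log r|$ give $2\pi r\,m'(r)=\int_{B_r^*}\Delta\tilde v\,dA$ and then $|\log r|\,r\,m'(r)\to c_0:=16|\tilde\Xi_0(0)|^2e^{2\ell}-1$; integrating $m'$ and using that $m$ stays bounded forces $c_0=0$, i.e.\ $\tilde v(0)=\ell=-2\log 2-\log|\tilde\Xi_0(0)|$, as asserted. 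Once $c_0=0$, set $w:=\tilde v-\ell$; a Taylor expansion of the right--hand side gives $\Delta w=2Kw\,(1+o(1))+K\,O(|z|)$, and a barrier argument comparing $w$ with suitable powers of $|\log|z||$ solving the perturbed Euler equation $\ddot w=2w/(\log r)^2$ (in the variable $\log r$), followed by one bootstrap, upgrades $w=o(1)$ to $w=O(1/|\log|z||)$. Hence $\Delta\tilde v=O(|z|^{-2}|\log|z||^{-3})$, and one more potential estimate of the type above yields $\partial_x\tilde v,\partial_y\tilde v=O(|z|^{-1}(\log|z|)^{-2})$.

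The step I expect to be the main obstacle is obtaining the \emph{sharp} rates for the first derivatives in the borderline range $\beta\in[n+\tfrac12,n+1]$: there the source term is only borderline integrable, so naive elliptic estimates lose a power of $\log$ (or of $|z|$), and one must combine a preliminary Hölder (resp.\ logarithmic) bootstrap for $v$ (resp.\ $\tilde v$) with potential estimates that genuinely exploit the cancellation in the kernel $(z-\zeta)^{-1}$, together with — in the logarithmic case — the comparison with the Euler equation above. The identification of $\tilde v(0)$ is the one genuinely new analytic input beyond~\cite{Haydys15_IsolSing_CMP}, and it is handled by the mean--value computation described.
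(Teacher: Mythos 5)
Your argument is correct in strategy but takes a genuinely different route from the paper. The paper's proof is two lines: since $\log|\Xi_0|$ is harmonic, the substitution $u_1:=2u+2\log|\Xi_0|+5\log 2$ converts $\Delta u=16|\Xi_0|^2e^{2u}$ into Liouville's equation $\Delta u_1=e^{u_1}$, and then \emph{everything} in the lemma --- the dichotomy, the continuity of $v$ and $\tilde v$, the value $\tilde v(0)$, and all four derivative rates --- is read off from Nitsche's classical classification of isolated singularities of $\Delta u_1=e^{u_1}$ (\cite{Nitsche57_UeberIsolSing}, Thm.~1.1). You instead take the coarse dichotomy $w=|z|^\beta(C+o(1))$ versus $w=-|z|^{n+1}\log|z|(C+o(1))$ from~\cite{Haydys15_IsolSing_CMP} as input and reprove the refined statement by hand: Newtonian potential estimates with the radial leading term split off to get the sharp derivative rates in the borderline range, a mean-value/Green's identity computation to force the balance condition $16|\tilde\Xi_0(0)|^2e^{2\tilde v(0)}=1$ (which does give $\tilde v(0)=-2\log 2-\log|\tilde\Xi_0(0)|$), and a barrier comparison with the Euler equation $\ddot w=2w/\rho^2$ to upgrade $\tilde v-\tilde v(0)=o(1)$ to $O(1/|\log|z||)$. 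This is essentially a self-contained reproof of the relevant portion of Nitsche's theorem; what it buys is transparency (in particular it makes visible \emph{why} the constant $\tilde v(0)$ is what it is), at the cost of substantial PDE work that the paper outsources. Two steps of yours are only sketched and would need to be written out in full: the cancellation-based potential estimate in the range $\beta\in[n+\tfrac12,n+1)$ (your splitting is the right one, but the remainder term's exponent depends on a preliminary H\"older exponent that degenerates as $\beta\to n+1$, so the bookkeeping must be checked), and the sub/supersolution argument near the puncture, where the barrier must be augmented by a multiple of the growing Euler solution $\rho^2$ to dominate $w$ on the shrinking inner boundary circle. Both are standard and I see no obstruction to completing them.
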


\begin{proof}
Write $\Xi=\Xi_0\, dz^3$, where $\Xi_0$ is a holomorphic function on $B_1^*$. 
By the proof of Theorem~1.1 of~\cite{Haydys15_IsolSing_CMP}  $u$ satisfies $\Delta u= 16|\Xi_0|^2 e^{2u}$.

Observe that since $\Xi_0$ is holomorphic, $\log |\Xi_0|$ is harmonic. 
Using this, a straightforward computation shows that $u_1:= 2u + 2\log |\Xi_0| + 5\log 2$ is a solution of Liouville's equation $\Delta u_1 = e^{u_1}$. 
With this at hand, the statement of this lemma is immediately obtained from~\cite{Nitsche57_UeberIsolSing}*{Thm.\,1.1}.
\end{proof}

We  say that the order of $h$ at the origin is $\hat N=N+1$ if 
\begin{equation}
  \label{Eq_ExpansionOfHarmFunct}
h=
  \begin{cases}
  \sum\limits_{j=N+1}^\infty p_j(x,y), &\text{if } N+1>0,\\
  q_0\log |z| + h_0, &\text{if } N+1=0,\\
  \sum\limits_{j=1}^{|N+1|} \frac {q_j(x,y)}{|z|^{2j}} +q_0\log |z| + h_0, &\text{if } N+1<0,
  \end{cases}
\end{equation}
where $p_j, q_j$ are homogeneous harmonic polynomials of degree $j$ and  $h_0$ is a smooth harmonic function; cf.~\cite{Axler01_HarmonicFuncTheory}*{p.\,219}.

By~\eqref{Eq_XiViaH}, the order of $\Xi$ at the origin is
\begin{equation}
\label{Eq_nOrdXi}
 n=\mathrm{ord}_0\,\Xi = \begin{cases} N &\text{if } a = 0,\\ \min\{-1,N\} &\text{if }  a\neq 0. \end{cases}
\end{equation}
Notice also that the harmonicity of $h$ implies that $\frac {\partial h}{\partial z}$ is holomorphic.

\begin{proof}[\textbf{Proof of~\autoref{Thm_spKLocalModels}}]  
Let $(h,u,a)$ be a triple representing the special K\"ahler structure $(g,\nabla)$ as in \eqref{Eq_SpKaehlerHU}. As before, denote by $\hat{N} = N+1$  the order of $h$ at $0$.
From \eqref{Eq_ExpansionOfHarmFunct}, we have the following asymptotics:
	\begin{equation}
	  \label{Eq_ExpansionOfH}
	h=\begin{cases}
			\tfrac{r^{N+1}}{N+1}\Re (a_N e^{i{(N+1)}\theta}) + O(r^{N+2}) & \text{if }N\notin \{ -2, -1 \}, \\ 
			-\Re(a_{-2} r^{-1} e^{-i\theta}) + O(\log(r))&\text{if } N=-2,\\
			 \Re(a_{-1} \log(re^{i\theta})) + O(1)&\text{if } N=-1.
		 \end{cases}
	\end{equation}
Note that, while $a_N\in \C$ in general, we have $a_{-1}\in\R$, since $h$ is not allowed to be multivalued.
Furthermore, 
	\begin{equation}
	  \label{Eq_AuxDerivOfH}
		\partial_r h= r^N\Re(a_N e^{i(N+1)\theta}) + O(r^{N+1})
		\quad\text{and}\quad 
		\partial_\theta h= -r^{N+1}\Im(a_N e^{i(N+1)\theta}) + O(r^{N+2}). 
	\end{equation}
Combining~\eqref{Eq_ExpansionOfH},~\eqref{Eq_AuxDerivOfH}, and~\eqref{Eq_XiViaH}, we also obtain
\begin{equation}
	\label{Eq_Xi0Asympt}
	\Xi_0(z)=\begin{cases}
		\tfrac 14 a_N z^N + O(r^{N+1}) & \text{if }N\neq -1, \\ 
		\tfrac 14 (a+ a_{-1}i) z^{-1} + O(1)&\text{if } N=-1.
	\end{cases}
\end{equation}

\noindent
\textbf{Case 1: The conical singularity.}
 Writing $u=-\b\log |z| +v$, with the help of  \autoref{lem_asymptotics_u} we obtain
\begin{equation}\label{Eq_asymptotics_nabla}
\begin{aligned}
 &\bigl | \partial_\theta h \bigr|\lesssim r^{n+1},  && \bigl | e^u\partial_\theta h \bigr|\lesssim r^{n+1-\b},\\
 & \bigl | \partial_\theta u \bigr|=\bigl | \partial_\theta v \bigr|\lesssim r\bigl ( |\partial_x v| + |\partial_y v| \bigr)=o(r^\varepsilon), &\qquad& \bigl | re^u\partial_rh \bigr |\lesssim r^{n+1-\b}\\
 & \bigl | r\partial_r u +\b \bigr|= \bigl | r\partial_r v\bigr|=o(r^\varepsilon),
\end{aligned}
\end{equation}
for a suitable $\varepsilon>0$.

Notice also that if $a\neq 0$, then $n=\mathrm{ord}_0\,\Xi\leq -1$. Therefore, $\b<n+1 \leq 0$, and we obtain
\begin{equation}
  \label{Eq_AuxExpU}
 e^u = e^{-\b\log |z| + v} = |z|^{-\b}e^v = o(r^\varepsilon).
\end{equation}

Furthermore, by~\eqref{Eq_Conn1Components} we have
\begin{equation}
\label{Eq_OmInPolarCoord}
\begin{aligned}
2\omega_{11} &= \bigl (e^u \partial_r h -\partial_r u  \bigr )\, dr + \bigl ( e^u\partial_\theta h -\partial_\theta u - e^u a \bigr )\, d\theta,\\
2\omega_{22} &= -\bigl ( e^u \partial_r h +\partial_r u \bigr )\, dr - \bigl ( e^u \partial_\theta h +\partial_\theta u - e^u a \bigr)\, d\theta.
\end{aligned}
\end{equation}

Substituting  \eqref{Eq_asymptotics_nabla} and \eqref{Eq_AuxExpU} into \eqref{Eq_OmInPolarCoord}, we obtain
\begin{equation*}
 \label{Eq_LimConnForm_gen}
\lim_{|z|\to 0}\left(\omega_\nabla \left(\tfrac{\partial}{\partial\theta}\right)\right)
= \frac{1}{2}\begin{pmatrix} 0& -\beta\\    \beta &  0\end{pmatrix} \qquad\text{and}\qquad \omega_\nabla \left(\tfrac{\partial}{\partial r}\right)
= \begin{pmatrix} \frac{\beta}{2r}& 0\\    0&\frac{\beta}{2r}\end{pmatrix} + o(r^{-1+\varepsilon}).
\end{equation*}

\noindent
\textbf{Case 2: The logarithmic singularity.}   
Just like in Lemma~\ref{lem_asymptotics_u}, write $u=-(n+1)\log r-\log \bigl |\log r \bigr| +\tilde v$. 
The continuity of $\tilde v$ yields
\begin{equation}
 \label{Eq_ExpU}
e^u=-\frac 1{r^{n+1}\log r}e^{\tilde v} = -\frac {C^{-1}  + o(1)}{r^{n+1}\log r},
\end{equation}
where $C = e^{-\tilde v(0)}$ is positive.
Moreover, Lemma~\ref{lem_asymptotics_u} yields
\begin{equation*}
  \label{Eq_AuxDerivOfW}
	\partial_r \tilde v=O\bigl (\tfrac 1{r(\log r)^2}\bigr )\qquad\text{and}\qquad 
	\partial_\theta \tilde v = O\bigl (\tfrac 1{(\log r)^2}\bigr ), 
\end{equation*}
and, therefore, 
\begin{equation}\begin{split}\label{Eq_asympt_du}
 du = & - \frac{n+1}{r}dr - \frac{1}{r\log(r)} dr + \frac{\partial \tilde v}{\partial r} dr + \frac{\partial \tilde v}{\partial \theta} d\theta \\
    = & - \frac{n+1}{r}dr - \frac{1}{r\log(r)} dr + \frac{o(1)}{r\log r} dr + \frac{o(1)}{\log r} d\theta.
\end{split}
\end{equation}

Furthermore, using (\ref{Eq_AuxDerivOfH}) and (\ref{Eq_ExpU}), we obtain
\begin{equation}\begin{split}\label{Eq_asymptotics_eudh}
 e^u(dh + a\varphi) 
		    = & -(C^{-1}+o(1))r^{-(n+1)}\log(r)^{-1}\left( \frac{\partial h}{\partial r} dr + \frac{\partial h}{\partial \theta} d\theta - a d\theta \right) \\
		    = & -\frac{1}{C r\log r} r^{N-n}\left( \Re(a_Ne^{i(N+1)\theta}) + o(1)\right) dr \\
		      & + \frac{1}{C\log r} r^{N-n}\left(\Im(a_Ne^{i(N+1)\theta}) + o(1)\right)d\theta \\
		      & + \frac{1}{C\log r}r^{-(n+1)} a(1+o(1))d\theta.
 \end{split}\end{equation}
Define $b\in\C$  as follows:
\[
b :=\begin{cases} 
				C^{-1}a_n & \text{if }n\neq -1, \\ 
				C^{-1}(a_{-1} + ia) & \text{if }n=-1.
		\end{cases}
\]
Here, we use $a_{-1} = 0$ if $N>-1$. 

We claim that $|b|=1$ in fact.
Indeed, if $n\neq -1$ this follows from the computation  
\[
|b| = |a_n|/C = 4 |\tilde \Xi_0(0)|\, e^{\tilde v(0)} =1,
\]
where the second equality follows by~\eqref{Eq_Xi0Asympt} and the last one by~\autoref{lem_asymptotics_u}.
The case $n=-1$ requires only cosmetic changes in the computation above.

Recalling~\eqref{Eq_nOrdXi}, we obtain from \eqref{Eq_asymptotics_eudh}:
\begin{equation}
\begin{split}\label{Eq_asympt1}
 e^u(dh + a\varphi) = & -\frac{1}{r\log r} \left( \Re(b e^{i(n+1)\theta}) + o(1)\right) dr \\
		      & + \frac{1}{\log r} \left(\Im(b e^{i(n+1)\theta}) + o(1)\right)d\theta.
 \end{split}
\end{equation}

%
%
%
%
%
%
%
%
%
%
%
 Substituting \eqref{Eq_asympt_du} and \eqref{Eq_asympt1} into \eqref{Eq_Conn1Components}, we obtain
\[
 \begin{aligned}
   2\omega_{11} &= \Bigl ( n+1 + \frac {1-\Re(b e^{i(n+1)\theta})}{\log r} +\frac {o(1)}{\log r}\Bigr)\,\frac {dr}r + \Bigl ( \frac{\Im(b e^{i(n+1)\theta})}{\log r} +\frac {o(1)}{\log r} \Bigr)\, d\theta,\\
   2\omega_{22} &= \Bigl ( n+1 + \frac {1+\Re(b e^{i(n+1)\theta})}{\log r} +\frac {o(1)}{\log r}\Bigr)\,\frac {dr}r + \Bigl ( -\frac{\Im(b e^{i(n+1)\theta})}{\log r} +\frac {o(1)}{\log r} \Bigr)\, d\theta.
 \end{aligned}
\]
This immediately yields the statement of this theorem.
\end{proof}

\begin{remark}
	In general, the constant $b$ in~\autoref{Thm_spKLocalModels} depends on the local coordinate and, in most cases, can be normalized to $1$ via suitable choices. 
	For example, if $n\neq -3$ this can be achieved by the change of the local coordinate as in~\autoref{Rem_HUAandChangeOfcoord}.
	However, if $n=-3$, the coefficient of the leading term in~\eqref{Eq_ExpansionOfH} does not depend on the choice of the local coordinate and can not be normalized to $1$. 
\end{remark}

It may seem that~\autoref{Thm_spKLocalModels} implies that the holonomies of  $\nabla$ and its model ($\om_{\b,\nabla}^c$ or $\om_{n+1,\nabla}$) coincide. 
This is not quite accurate,~\autoref{Thm_spKLocalModels} implies only that the closures of the $\SL(2,\R)$--conjugacy classes of the holonomies of $\nabla$ and its local model coincide. 
To explain, recall that $\sigma=\sigma_r$ denotes the straight line segment $[r, 1]$ and $\gamma_r$ denotes the circle of radius $r$ centered at the origin.   
Denoting by $P_r:=P_{\sigma_r}$  and $P_{\gamma_r}$ the parallel transport along $\sigma_r$ and $\gamma_r$ respectively, we have
\begin{equation}
\label{Eq_ParalleTransports}
P_{\gamma_1} = P_r\comp P_{\gamma_r} \comp P_r^{-1}.
\end{equation}
Notice that $P_{\gamma_r}$ does not need to be constant in $r$, see  \autoref{Rem_HolDependsOnr}. 
Moreover, even though $P_{\gamma_r}$ is asymptotic to the parallel transport $ P^0_{\gamma_r}$ of the model connection along $\gamma_r$ as $r\to 0$, one can not pass to the limit in~\eqref{Eq_ParalleTransports} since $P_r$ (as well as $P_{\gamma_r}$) may diverge to infinity.

Observe, however, that  $\tr P_{\gamma_r}$ converges to $\tr P^0_{\gamma_r}$ as $r\to 0$. 
Since $\tr P_{\gamma_r}$ does not depend on $r$ indeed, we must have  $\tr P_{\gamma_1} =\tr P_{\gamma_r}=\tr P^0_{\gamma_r} = \tr P^0_{\gamma_1}$. 
It remains to notice that two matrices in  $\SL(2,\R)$ have equal traces if and only if  the closures of their conjugacy classes are equal; Moreover, the closure of the orbit of $A\in \SL(2,\R)\setminus \{ \pm \mathbbm 1 \}$ is strictly bigger than the orbit of $A$ itself if and only if $\tr A =\pm 2$.  

With this understood, we can proceed to the proof of~\autoref{Cor_Holonomy}.

\medskip

\begin{proof}[\textbf{Proof of~\autoref{Cor_Holonomy}}]
	As explained above,  the closure of the orbit of $\Hol(\rS^1, \nabla)$ in $\SL(2,\R)$ contains
	\[
	H_\b:=\exp
	\begin{pmatrix} 
	0&\pi\beta \\ 
	-\pi\beta& 0
	\end{pmatrix} = 
	\begin{pmatrix} 
	\cos(\pi\beta) &  \sin(\pi\beta) \\ -\sin(\pi\beta) & \cos(\pi\beta)
	\end{pmatrix},\qquad \beta\in (-\infty, n+1].
	\]
	Hence, if $\tr H_\b\neq \pm 2$, then $\Hol(\rS^1, \nabla)$ is conjugate to $H_\b$. 
	If $\tr H_\b=\pm 2$, then $\Hol(\rS^1,\nabla)$ is equal to $\pm\mathbbm 1$ or is conjugate to the elementary Jordan block:
	\[
	\begin{pmatrix}
	\pm 1 & \phantom{\pm}1\\
	0 & \pm 1
	\end{pmatrix}.
	\]
	
	It remains to show, that if $\b\in\Z$ and $\b <n+1$, then $\Hol(\rS^1, \nabla)=\pm\mathbbm 1$.  
	By Theorem~\ref{Thm_spKLocalModels}, the parallel transport of $\nabla$ along the radial segment $[r,1]\subset B_1^*$ is described by the system of ODEs
	\[
	\dot\eta= -\frac 1 s \Bigl ( \frac \b 2 + o(s^\e) \Bigr)\eta, 
	\]
	where $s\in [r, 1]$ and $\eta\colon [r,1]\to\R^2$.
	Notice that the above system becomes singular as $r$ goes to zero.
	
	Writing $\zeta (s) = s^{\b/2}\eta (s)$, we obtain 
	\begin{equation}
	\label{Eq_AuxDiffEqZeta}
	\dot\zeta= o(s^{\e -1})\zeta.
	\end{equation}
	Let us denote $\chi(s):=|\zeta(s)|^2$. 
	Then $\dot{\chi}=2\langle \zeta,\dot\zeta \rangle= o(s^{\e -1})\chi$.
	Integrating this equation, we obtain
	\[
	\log \frac {\chi(s_2)}{\chi(s_1)} = \int\limits_{s_1}^{s_2} o(s^{\e -1})\, ds \ge -C (s_2^{\e} - s_1^{\e}) 
	\]
	for some constant $C>0$ and any $s_1 <s_2$.
	Exponentiating this inequality we obtain
	\[
	\chi (s_1)\le \chi(s_2)\exp\bigl (C(s_2^\e - s_1^\e) \bigr)\le \chi(s_2)\exp (Cs_2^\e).
	\]
	This implies in particular, that for any solution $\zeta$ of~\eqref{Eq_AuxDiffEqZeta} with a fixed initial condition at $s=1$ the function  $\chi = |\zeta|^2$ is bounded on $(0,1)$.

	Think of $P_r$ as a $2\times 2$-matrix. 
	Then the above consideration shows that the matrix-valued function $r^{-\b/2}P_r$ is bounded, hence there is a sequence $r_i\to 0$ such that $r_i^{-\b/2}P_{r_i}$ converges to some $P_0$.
	Moreover, since $\nabla$ is symplectic, we have $\det (r^{-\b/2}P_r)=r^{-\b }\det P_r\to 1$ (recall that the chosen trivialization $(\partial_x, \partial_y)$ is \emph{not} symplectic).
	Hence, we can also assume that $(r_i^{-\b/2}P_{r_i})^{-1}$ converges; In particular, $P_0$ is invertible.
	
	Passing to the limit along the sequence $r_i$ in the equality
	\[
	P_{\gamma_1} = P_{r}\comp P_{\gamma_r}\comp P_r^{-1}= r^{-\b/2}P_r \comp P_{\gamma_r}\comp \bigl (r^{-\b/2}P_r\bigr)^{-1}.
	\]
	and using $P_{\gamma_r}\to \pm\mathbbm 1$, we obtain  $P_{\gamma_1}=\pm\mathbbm 1$.
\end{proof} 

\begin{proposition}
	\label{Cor_IntegralHolonomy}
	$\Hol(\rS^1,\nabla)$ is conjugate to a matrix lying in $\Sp(2,\Z)$ if and only if $\b\in \frac 12\Z\cup\frac 13\Z$.
\end{proposition}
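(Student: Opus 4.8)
The plan is to reduce the statement to a purely arithmetic question about which of the model holonomies computed in Corollary~\ref{Cor_Holonomy} can be conjugated into $\Sp(2,\Z)=\SL(2,\Z)$, and to answer that question by a trace and eigenvalue analysis. First I would recall that, by the discussion preceding the proof of Corollary~\ref{Cor_Holonomy}, the conjugacy class of $\Hol(\rS^1,\nabla)$ in $\SL(2,\R)$ is determined by its trace together with, in the case $\tr=\pm2$, whether it is $\pm\mathbbm 1$ or a nontrivial Jordan block; moreover from that corollary the possible values of $\tr\Hol(\rS^1,\nabla)$ are $2\cos\pi\b$ for $\b\notin\Z$, $+2$ for $\b\in 2\Z$, and $-2$ for $\b\in 2\Z+1$. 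A matrix of $\SL(2,\R)$ is conjugate inside $\SL(2,\R)$ to an element of $\SL(2,\Z)$ only if that element has integral trace, so the first constraint is $2\cos\pi\b\in\Z$, i.e.\ $2\cos\pi\b\in\{-2,-1,0,1,2\}$; this forces $\pi\b$ to be an integer multiple of $\pi/3$ or of $\pi/2$, which is exactly the condition $\b\in\tfrac12\Z\cup\tfrac13\Z$ (after noting that $\cos\pi\b=\pm\tfrac12$ corresponds to $\b\in\tfrac13\Z\setminus\Z$ and $\cos\pi\b=0$ to $\b\in\tfrac12\Z\setminus\Z$, while $\cos\pi\b=\pm1$ already lies in $\Z\subset\tfrac12\Z$).

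Next I would check the converse, namely that for every such $\b$ the holonomy is actually conjugate to a matrix in $\SL(2,\Z)$. For $\b\in\tfrac13\Z\setminus\Z$ the trace is $\pm1$, the matrix is elliptic with eigenvalues primitive sixth or cube roots of unity, hence conjugate over $\R$ to $\begin{pmatrix}0&-1\\1&1\end{pmatrix}$ or $\begin{pmatrix}0&-1\\1&-1\end{pmatrix}\in\SL(2,\Z)$ (both have the required characteristic polynomial and are regular). For $\b\in\tfrac12\Z\setminus\Z$ the trace is $0$ and the matrix is conjugate to $\begin{pmatrix}0&-1\\1&0\end{pmatrix}\in\SL(2,\Z)$. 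The delicate cases are $\b\in\Z$: when $\tr=2$ one has $\Hol=\mathbbm 1$ or is conjugate to $\begin{pmatrix}1&1\\0&1\end{pmatrix}\in\SL(2,\Z)$, and when $\tr=-2$ one has $\Hol=-\mathbbm 1$ or is conjugate to $\begin{pmatrix}-1&1\\0&-1\end{pmatrix}\in\SL(2,\Z)$; in each subcase the target is already integral, so the ``if'' direction holds for all of $\tfrac12\Z\cup\tfrac13\Z$.

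For the ``only if'' direction I must also rule out $\b\notin\tfrac12\Z\cup\tfrac13\Z$. If $\b\notin\Z$ then $2\cos\pi\b$ is a real algebraic number, and it is an integer precisely when $\b\in\tfrac12\Z\cup\tfrac13\Z$; indeed $2\cos\pi\b=2\cos(\pi p/q)$ for rational $\b=p/q$ is an algebraic integer whose minimal polynomial has degree $\varphi(2q)/2$ (or a small variant), so it is rational—hence an ordinary integer—only for $2q\in\{1,2,3,4,6\}$, i.e.\ $q\in\{1,2,3\}$, and for irrational $\b$ the quantity $2\cos\pi\b$ is transcendental by the Lindemann–Weierstrass theorem, in particular not an integer. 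Since conjugation preserves the trace and elements of $\SL(2,\Z)$ have integer trace, $\b\notin\tfrac12\Z\cup\tfrac13\Z$ implies $\Hol(\rS^1,\nabla)$ is not conjugate into $\SL(2,\Z)$. The main obstacle I anticipate is the clean handling of this number-theoretic input: one must be careful that ``conjugate in $\SL(2,\R)$'' (not merely in $\GL(2,\R)$) to an integral matrix imposes no obstruction beyond the trace condition in the $2\times2$ case—this is where the remarks before Corollary~\ref{Cor_Holonomy} about trace determining the closure of the conjugacy class, and about Jordan type in the parabolic case, do the real work—and that one correctly identifies which rational multiples of $\pi$ give $\cos$ a value in $\tfrac12\Z$, i.e.\ invoking Niven's theorem on rational values of cosine at rational multiples of $\pi$.
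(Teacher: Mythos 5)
Your overall strategy is the same as the paper's: the paper's proof is a two-line trace argument, observing that an element of $\Sp(2,\R)=\SL(2,\R)$ has integral characteristic polynomial if and only if its trace is an integer, so the question reduces to $\cos\pi\b\in\{0,\pm\tfrac12,\pm1\}$, which is equivalent to $\b\in\tfrac12\Z\cup\tfrac13\Z$. Your extra care on the ``if'' direction (exhibiting explicit integral matrices realizing each trace and each Jordan type, and checking that $\SL(2,\R)$-conjugacy rather than $\GL(2,\R)$-conjugacy causes no obstruction) is a worthwhile elaboration of what the paper leaves implicit. However, one auxiliary claim in your ``only if'' direction is false: it is not true that $2\cos\pi\b$ is transcendental for every irrational $\b$ (take $\b=\arccos(\tfrac14)/\pi$, which is irrational while $2\cos\pi\b=\tfrac12$); Lindemann--Weierstrass does not apply since $\pi\b$ need not be algebraic. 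This error is harmless because the detour through Niven's theorem and transcendence is unnecessary: you only need that $2\cos\pi\b\in\{0,\pm1,\pm2\}$ forces $\b\in\tfrac12\Z\cup\tfrac13\Z$, and this follows by directly solving $\cos\pi\b=c$ for each of the five values of $c$ --- the solution sets are exactly $\tfrac12+\Z$, $\pm\tfrac13+2\Z$ together with $\pm\tfrac23+2\Z$, and $\Z$, all contained in $\tfrac12\Z\cup\tfrac13\Z$. I recommend replacing the number-theoretic paragraph with this elementary observation, after which your proof is correct and matches the paper's.
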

\begin{proof}
	Since $\Hol(\rS^1,\nabla)\in \Sp(2, \R)$, the characteristic polynomial of $\Hol(\rS^1,\nabla)$ has integer coefficients if and only if $\tr\Hol(\rS^1,\nabla)\in\Z$.
	This implies that $\Hol(\rS^1,\nabla)$ is conjugate to a matrix lying in $\Sp(2,\Z)$ if and only if $\cos\pi\b\in \{0, \pm\frac 12, \pm 1\}$. 
\end{proof}

The context where one meets special K\"ahler structures, whose flat symplectic connection $\nabla$ has an integral holonomy, is called algebraic integrable systems.
Roughly speaking, an algebraic integrable system is a smooth \hK manifold $X$ equipped with a holomorphic map $\pi\colon X\to M$, whose fibers are smooth compact Lagrangian tori equipped with  polarizations.
We refer to~\cite{Freed99_SpecialKaehler}*{Def.\,3.1} for details. 
It turns out that an algebraic integrable system induces a special K\"ahler structure on the base $M$~\cite{Freed99_SpecialKaehler}*{Thm.\,3.4}.
Moreover, if the polarization of the fibers is principal, the holonomy of the corresponding flat symplectic connection is integral.  
As a refinement of the above proposition, we obtain the following result. 

\begin{proposition}
	Let $\pi\colon X\to B_1$ be an elliptic fibration such that all fibers $X_z$, $z\neq 0$, are smooth and the central fiber $X_0$ is  not multiple.
	Suppose that $\pi\colon X\setminus X_0\to B_1^*$ admits a structure of the  algebraic integrable system such that all non-central fibers are principally polarized.  
	Let $(g,\nabla)$ be the induced special K\"ahler structure on $B_1^*$.
	Assume that the associated holomorphic cubic form $\Xi$ has a finite order $n\in\Z$ at the origin.
	If the Kodaira type of $X_0$ is as in the left column of Table~\ref{Table_Singularities}, then the type and order multiplied by the factor $2$ of $(g,\nabla)$ is given in the middle and right columns of Table~\ref{Table_Singularities} respectively. 
	In addition, the order of a conical singularity is always smaller than $n+1$.
\begin{table}[H]
	\renewcommand\arraystretch{1.5}
	\begin{center}
		\begin{tabular}{| c | c | c |}
			\hline
			\textbf{Kodaira type} & \textbf{Type of isolated singularity} & \textbf{Twice the order of isolated singularity}  \\ \hline
			$\rI_0$  & conical\ /\  logarithmic & $\b$  even integer\  \;/\ \; $ n+1$,\quad $n$  odd\\
			\hline
			$\rI_0^*$ & conical\ /\  logarithmic & $\b$ odd integer\  \;/\ \; $ n+1$, \quad $n$ even\\
			\hline
			$\rI_b$, \quad $b\neq 0$ & logarithmic & $n+1$,\quad $n$ odd\\
			\hline
			$\rI_b^*$, \quad $b\neq 0$ & logarithmic & $n+1$,\quad $n$ even\\
			\hline
			$\rI\rI$ or $\rI\rI^*$ & conical & $\b = \frac {6k\pm 1}3$,\quad $k\in \Z$\\
			\hline
			$\rI\rI\rI$ or $\rI\rI\rI^*$ & conical  & $\b = \frac 12 +k,\quad k\in\Z$\\
			\hline
			$\rI\rV$ or $\rI\rV^*$ & conical & $\b = \frac {6k\pm 2}3$,\quad $k\in \Z$\\
			\hline
		\end{tabular}
	\caption{\label{Table_Singularities} Singular fibers and corresponding  singularities of affine special K\"ahler structures.}
	\end{center}
\end{table}
\end{proposition}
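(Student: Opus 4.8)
The plan is to combine three inputs: the monodromy classification of Kodaira's singular fibers, the description of the holonomy of $\nabla$ in terms of $\beta$ from Corollary~\ref{Cor_Holonomy}, and the compatibility between the monodromy of the elliptic fibration and the holonomy of the flat symplectic connection coming from the algebraic integrable system structure. First I would recall that for $\pi\colon X\setminus X_0\to B_1^*$ the local system $R^1\pi_*\Z$ has monodromy around the origin given, up to conjugacy in $\SL(2,\Z)$, by Kodaira's table: $\rI_0$ is trivial, $\rI_0^*$ is $-\mathbbm 1$, $\rI_b$ ($b\neq 0$) is $\left(\begin{smallmatrix}1 & b\\ 0 & 1\end{smallmatrix}\right)$, $\rI_b^*$ is $-\left(\begin{smallmatrix}1 & b\\ 0 & 1\end{smallmatrix}\right)$, and the types $\rI\rI,\rI\rI\rI,\rI\rV$ and their starred versions have finite monodromy of orders $6,4,3$ and $3,4,6$ respectively. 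Under the algebraic integrable system structure with principal polarization, Freed's construction (\cite{Freed99_SpecialKaehler}*{Thm.\,3.4}) identifies the flat symplectic connection $\nabla$ on $TM$ with the Gauss--Manin connection on $R^1\pi_*\R$ (tensored appropriately), so that $\Hol(\rS^1,\nabla)$ is conjugate in $\Sp(2,\R)$ to the topological monodromy of the fibration. The hypothesis that $X_0$ is not multiple and that the polarization is principal is exactly what makes this identification hold over $\Z$.

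Next I would match these monodromies against the list in Corollary~\ref{Cor_Holonomy}. Given that $\Xi$ has finite order $n$, Theorem~\ref{Thm_spKLocalModels} forces the singularity to be either conical with $\beta<n+1$ or logarithmic with order $\tfrac12(n+1)$, and Corollary~\ref{Cor_Holonomy} then pins $\Hol(\rS^1,\nabla)$ down from $\beta$ alone. Comparing: a \emph{finite} monodromy that is not $\pm\mathbbm 1$ can only come from the conical case with $\beta\notin\Z$ and $\Hol$ conjugate to a genuine rotation, so $\cos\pi\beta$ must equal the value dictated by the order of the monodromy — giving $\beta\in\frac13\Z$ (orders $3,6$) for types $\rI\rI^{(*)},\rI\rV^{(*)}$ and $\beta\in\frac12+\Z$ (order $4$) for $\rI\rI\rI^{(*)}$; the precise residues $\frac{6k\pm1}{3}$ versus $\frac{6k\pm2}{3}$ distinguish type $\rI\rI$-family from type $\rI\rV$-family since their monodromies, though both of the same order in one case, are not conjugate and have different traces ($\tr = 1$ vs.\ $\tr = -1$ for the order-$6$ and order-$3$ cases). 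A monodromy equal to $+\mathbbm 1$ (type $\rI_0$) forces, via Corollary~\ref{Cor_Holonomy}, either $\beta\in 2\Z$ (conical) or the logarithmic case with $n+1$ even, i.e.\ $n$ odd; here one must also invoke that a \emph{trivial} holonomy in the logarithmic case is excluded by the nontrivial Jordan block in $\om_{n+1,\nabla}$, forcing $n$ odd as claimed. The cases $-\mathbbm 1$ (type $\rI_0^*$), unipotent (type $\rI_b$), and $-$unipotent (type $\rI_b^*$) are handled identically, reading off $\beta\in 2\Z+1$ or the corresponding logarithmic parities from Corollary~\ref{Cor_Holonomy}. Finally, the last sentence — that a conical singularity always has order strictly less than $n+1$ — is simply the constraint $\beta<n+1$ from Theorem~\ref{Thm_spKLocalModels}, restated.

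The main obstacle I anticipate is establishing cleanly that the holonomy of $\nabla$ really is $\Sp(2,\R)$-conjugate to the topological monodromy of the elliptic fibration, including keeping track of the polarization and the normalization of the symplectic form so that the identification is genuinely over $\Z$ (not merely over $\R$); this is where the principal-polarization and non-multiplicity hypotheses must be used carefully, and where one has to be precise about the discrepancy between the trivialization $(\partial_x,\partial_y)$ used throughout the paper (which is not symplectic) and a symplectic frame. A secondary, more bookkeeping-type difficulty is disentangling which Kodaira types with equal-order monodromy are nonetheless nonconjugate — this is resolved by the trace, and by the fact that $\rI\rI$ and $\rI\rV^*$ (resp.\ $\rI\rV$ and $\rI\rI^*$) have the \emph{same} monodromy conjugacy class, so the $\beta$-intervals they correspond to genuinely coincide, which is why the table groups them together.
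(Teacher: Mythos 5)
Your core strategy is the same as the paper's: identify $\tr\Hol(\rS^1,\nabla)$ with the trace of the topological monodromy of the elliptic fibration around $0$, read that trace off Kodaira's classification, and then invert Corollary~\ref{Cor_Holonomy} via $\tr\Hol(\rS^1,\nabla)=2\cos\pi\beta$ (with $\beta=n+1$ in the logarithmic case). The paper is much terser: it justifies only the row $\rI_0$, obtaining $\tr\Hol(\rS^1,\nabla)=2$ from the proof of Theorem~11.1 and Table~6 of Barth--Hulek rather than from Freed's Theorem~3.4, and declares the other rows similar. Your version supplies the case analysis the paper omits, and the last sentence of the proposition is indeed just the constraint $\beta<n+1$ from Theorem~\ref{Thm_spKLocalModels}, as you say.

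Two of your side remarks are incorrect, though neither is needed for the argument. First, $\rI\rI$ and $\rI\rV^*$ do \emph{not} have the same monodromy conjugacy class: their monodromies have traces $1$ and $-1$ respectively (an order-$6$ versus an order-$3$ element of $\SL(2,\Z)$), and the table in fact pairs $\rI\rI$ with $\rI\rI^*$ and $\rI\rV$ with $\rI\rV^*$; the correct justification for that pairing is simply that the starred type has the inverse monodromy, which has the same trace, and the trace is all the argument uses. Second, the claim that ``a trivial holonomy in the logarithmic case is excluded by the nontrivial Jordan block in $\om_{n+1,\nabla}$'' contradicts the paper's own caveat preceding the proof of Corollary~\ref{Cor_Holonomy}: the asymptotics determine only the \emph{closure} of the $\SL(2,\R)$--conjugacy class of the holonomy, so trivial holonomy is not excluded for a logarithmic singularity. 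Fortunately the parity claim for $\rI_0$ needs neither remark: $\tr\Hol(\rS^1,\nabla)=2$ together with $\tr\Hol(\rS^1,\nabla)=2(-1)^{n+1}$ in the logarithmic case forces $n+1$ even, i.e.\ $n$ odd. With those two remarks deleted, your argument is sound and matches the paper's.
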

\begin{proof}
	Let us justify the first row of  Table~\ref{Table_Singularities}; Other rows can be justified in a similar manner.
	
	Thus, assume that the singular fiber is of type $\rI_0$. 
	By the proof of Theorem 11.1 of~\cite{BarthHulek:04} and inspection of Table~6 in the op.cit. we conclude that $\tr \Hol(\rS^1, \nabla)= 2$,  which yields $\cos \pi\b = 1$. 
\end{proof}

\begin{remark}
	It is quite plausible that the statement of the above proposition can be made stronger. 
	For example, it seems likely that in the first two rows of~Table~\ref{Table_Singularities} only the conical singularity can occur. 
	Also, there should be a relation between $b$ and $n$ in the third and fourth rows. 
	More importantly, the assumption that $\pi\colon X\setminus X_0\to B_1^*$ can be extended to an elliptic fibration by adding a suitable central fiber may well follow from the other assumptions.  
	We intend to  address these points elsewhere.
\end{remark}

\appendix

\bibliography{references}
\end{document}